\documentclass[12pt]{amsart}
\usepackage{amsmath,amsthm,amsfonts,amssymb,latexsym,enumerate,url}

\headheight=7pt
\textheight=574pt
\textwidth=432pt
\topmargin=14pt
\oddsidemargin=18pt
\evensidemargin=18pt

\begin{document}

\theoremstyle{plain}

\newtheorem{thm}{Theorem}[section]
\newtheorem{lem}[thm]{Lemma}
\newtheorem{conj}[thm]{Conjecture}
\newtheorem{pro}[thm]{Proposition}
\newtheorem{cor}[thm]{Corollary}
\newtheorem{que}[thm]{Question}
\newtheorem{rem}[thm]{Remark}
\newtheorem{defi}[thm]{Definition}

\newtheorem*{thmA}{THEOREM A}
\newtheorem*{CorB}{COROLLARY B}
\newtheorem*{thmC}{Theorem C}

\newtheorem*{thmAcl}{Main Theorem$^{*}$}
\newtheorem*{thmBcl}{Theorem B$^{*}$}

\numberwithin{equation}{section}

\newcommand{\Maxn}{\operatorname{Max_{\textbf{N}}}}
\newcommand{\Syl}{\operatorname{Syl}}
\newcommand{\dl}{\operatorname{dl}}
\newcommand{\Con}{\operatorname{Con}}
\newcommand{\cl}{\operatorname{cl}}
\newcommand{\Stab}{\operatorname{Stab}}
\newcommand{\Aut}{\operatorname{Aut}}
\newcommand{\Ker}{\operatorname{Ker}}
\newcommand{\fl}{\operatorname{fl}}
\newcommand{\Irr}{\operatorname{Irr}}
\newcommand{\SL}{\operatorname{SL}}
\newcommand{\GL}{\operatorname{GL}}
\newcommand{\SU}{\operatorname{SU}}
\newcommand{\GU}{\operatorname{GU}}
\newcommand{\Sp}{\operatorname{Sp}}
\newcommand{\Spin}{\operatorname{Spin}}
\newcommand{\PGL}{\operatorname{PGL}}
\newcommand{\PSL}{\operatorname{PSL}}
\newcommand{\PSU}{\operatorname{PSU}}
\newcommand{\PGU}{\operatorname{PGU}}
\newcommand{\FF}{\mathbb{F}}
\newcommand{\NN}{\mathbb{N}}
\newcommand{\N}{\mathbf{N}}
\newcommand{\C}{\mathbf{C}}
\newcommand{\OO}{\mathbf{O}}
\newcommand{\F}{\mathbf{F}}

\renewcommand{\labelenumi}{\upshape (\roman{enumi})}

\providecommand{\St}{\mathsf{St}}
\providecommand{\E}{\mathrm{E}}
\providecommand{\PSp}{\mathrm{PSp}}
\providecommand{\Sp}{\mathrm{Sp}}
\providecommand{\SO}{\mathrm{SO}}
\providecommand{\re}{\mathrm{Re}}

\def\irrp#1{{\rm Irr}_{p'}(#1)}

\def\Z{{\mathbb Z}}
\def\C{{\mathbb C}}
\def\Q{{\mathbb Q}}
\def\irr#1{{\rm Irr}(#1)}
\def\cent#1#2{{\bf C}_{#1}(#2)}
\def\syl#1#2{{\rm Syl}_#1(#2)}
\def\nor{\triangleleft\,}
\def\oh#1#2{{\bf O}_{#1}(#2)}
\def\Oh#1#2{{\bf O}^{#1}(#2)}
\def\zent#1{{\bf Z}(#1)}
\def\det#1{{\rm det}(#1)}
\def\ker#1{{\rm ker}(#1)}
\def\norm#1#2{{\bf N}_{#1}(#2)}
\def\alt#1{{\rm Alt}(#1)}
\renewcommand{\Im}{{\mathrm {Im}}}
\newcommand{\Ind}{{\mathrm {Ind}}}
\newcommand{\diag}{{\mathrm {diag}}}
\newcommand{\soc}{{\mathrm {soc}}}
\newcommand{\End}{{\mathrm {End}}}
\newcommand{\sol}{{\mathrm {sol}}}
\newcommand{\Hom}{{\mathrm {Hom}}}
\newcommand{\Mor}{{\mathrm {Mor}}}
\newcommand{\Mat}{{\mathrm {Mat}}}
\newcommand{\Tr}{{\mathrm {Tr}}}
\newcommand{\tr}{{\mathrm {tr}}}
\newcommand{\Gal}{{\it Gal}}
\newcommand{\Spec}{{\mathrm {Spec}}}
\newcommand{\ad}{{\mathrm {ad}}}
\newcommand{\Sym}{{\mathrm {Sym}}}
\newcommand{\Char}{{\mathrm {char}}}
\newcommand{\pr}{{\mathrm {pr}}}
\newcommand{\rad}{{\mathrm {rad}}}
\newcommand{\abel}{{\mathrm {abel}}}
\newcommand{\codim}{{\mathrm {codim}}}
\newcommand{\ind}{{\mathrm {ind}}}
\newcommand{\Res}{{\mathrm {Res}}}
\newcommand{\Ann}{{\mathrm {Ann}}}
\newcommand{\Ext}{{\mathrm {Ext}}}
\newcommand{\Alt}{{\mathrm {Alt}}}
\newcommand{\CC}{{\mathbb C}}
\newcommand{\ch}{{\mathcal C}}
\newcommand{\CB}{{\mathbf C}}
\newcommand{\RR}{{\mathbb R}}
\newcommand{\QQ}{{\mathbb Q}}
\newcommand{\ZZ}{{\mathbb Z}}
\newcommand{\NB}{{\mathbf N}}
\newcommand{\ZB}{{\mathbf Z}}
\newcommand{\OB}{{\mathbf O}}
\newcommand{\EE}{{\mathbb E}}
\newcommand{\PP}{{\mathbb P}}
\newcommand{\GC}{{\mathcal G}}
\newcommand{\HC}{{\mathcal H}}
\newcommand{\AC}{{\mathcal A}}
\newcommand{\SC}{{\mathcal S}}
\newcommand{\TC}{{\mathcal T}}
\newcommand{\CL}{{\mathcal C}}
\newcommand{\EC}{{\mathcal E}}
\newcommand{\Om}{\Omega}
\newcommand{\eps}{\epsilon}
\newcommand{\varep}{\varepsilon}
\newcommand{\al}{\alpha}
\newcommand{\chis}{\chi_{s}}
\newcommand{\sigmad}{\sigma^{*}}
\newcommand{\PA}{\boldsymbol{\alpha}}
\newcommand{\gam}{\gamma}
\newcommand{\lam}{\lambda}
\newcommand{\la}{\langle}
\newcommand{\ra}{\rangle}
\newcommand{\hs}{\hat{s}}
\newcommand{\htt}{\hat{t}}
\newcommand{\tn}{\hspace{0.5mm}^{t}\hspace*{-0.2mm}}
\newcommand{\ta}{\hspace{0.5mm}^{2}\hspace*{-0.2mm}}
\newcommand{\tb}{\hspace{0.5mm}^{3}\hspace*{-0.2mm}}
\def\skipa{\vspace{-1.5mm} & \vspace{-1.5mm} & \vspace{-1.5mm}\\}
\newcommand{\tw}[1]{{}^#1\!}
\renewcommand{\mod}{\bmod \,}

\title{Alternating sums over $\pi$-subgroups}
\author{Gabriel Navarro}
\address{Department of Mathematics,   Universitat de Val\`encia, 46100 Burjassot,
Val\`encia, Spain}
\email{gabriel@uv.es}
\author{Benjamin Sambale}
\address{Institut f\"ur Algebra, Zahlentheorie und Diskrete Mathematik, Leibniz Universit\"at Hannover, Welfengarten~1, 30167 Hannover, Germany}
\email{sambale@math.uni-hannover.de}

\thanks{
The research of the first author supported by Ministerio de Ciencia e Innovaci\'on PID2019-103854GB-I00 and FEDER
funds. The second author thanks the German Research Foundation (projects SA 2864/1-2 and SA 2864/3-1).}

\keywords{Dade's Conjecture, Alternating Sums, $\pi$-subgroups}

\subjclass[2010]{Primary 20C15; Secondary 20C20}

\begin{abstract}
Dade's conjecture predicts that if $p$
is a prime, then the number of irreducible characters of a finite group of a given $p$-defect  is determined by local subgroups. In this paper we replace $p$ by a set of primes $\pi$ and prove a $\pi$-version of Dade's conjecture for $\pi$-separable groups. This extends the (known) $p$-solvable case of the original conjecture and relates to a $\pi$-version of Alperin's weight conjecture previously established by the authors.
\end{abstract}

\maketitle
 
\section{Introduction} 

One of the most general local-global counting conjecture for irreducible complex characters of finite groups is due to E. C. Dade~\cite{D}. 
For a finite group $G$, a prime $p$ and an integer $d>0$, the conjecture asserts that the number of irreducible characters of $G$ of $p$-defect $d$ can be computed by an alternating sum over chains of $p$-subgroups. (In this paper, we only deal with the group-wise ordinary conjecture; see \cite[Conjecture~9.25]{N}.)
Dade~\cite{D} already showed that his conjecture implies Alperin's weight conjecture. The first author has proved that McKay's conjecture is also a consequence of Dade's conjecture (see \cite[Theorem~9.27]{N}). Dade's conjecture is known to be true
for $p$-solvable groups by work of G.~R.~Robinson~\cite{R} (see also Turull~\cite{T17}), and a reduction of it to
simple groups has been recently conducted by B.~Sp\"ath~\cite{Sp}. 

In previous work by Isaacs--Navarro~\cite{IN} and the present authors~\cite{NS}, we have replaced $p$ by a set of primes $\pi$ in order to prove variants of Alperin's weight conjecture for $\pi$-separable groups. 
In this paper, we are interested in chains of $\pi$-subgroups and alternating sums: that is, we look for a $\pi$-version of Dade's conjecture and for possible applications. For instance: if $\ch(G)$ is the set of chains of $\pi$-subgroups of $G$,  $G_C$ is the stabilizer in $G$ of the chain $C$, and $k(G_C)$ is the number of conjugacy classes of $G_C$, does the number
\[\mu_\pi(G)= \sum_{C \in \ch(G)} (-1)^{|C|} \frac{|G_C|}{|G|}k(G_C)\]
have any group-theoretical meaning? (Notice that $\mu_\pi(G)$ is an integer, since the factor $|G_C|/|G|$ disappears when we sum over $G$-conjugacy classes of chains.)

If $\pi=\{p\}$, then the Alperin weight conjecture (with the Kn\"orr--Robinson~\cite{KR} reformulation) asserts that $\mu_p(G)$ is the number of $p$-defect zero characters of $G$. In particular, this is the case for $p$-solvable groups.
If $G={\rm PSL}_2(11)$ and $\pi=\{2,3\}$, say, then $\mu_\pi(G)=0$, while $G$ has 2
irreducible characters with $\pi$-defect zero (i.\,e. $p$-defect zero for every $p\in\pi$). So, whatever the meaning of $\mu_\pi(G)$ is,
certainly it is not the number of $\pi$-defect zero characters of $G$. 

For an integer $d\ge 1$, we let $k_d(G)$ to be the
number of irreducible characters $\chi \in \irr G$ 
such that $|G|_\pi=d\chi(1)_\pi$, where $n_\pi=\prod_{p \in \pi}n_p$, and $n_p$ is the largest
power of $p$ dividing the positive integer $n$. 
(Notice that this deviates slightly from the usual notation for $\pi=\{p\}$.)

Our main result is a natural generalization of Dade's conjecture for $p$-solvable groups:\footnote{Theorem~A was proposed as a conjecture in the second author's Oberwolfach talk in 2019.}

\begin{thmA}
Let $G$ be a $\pi$-separable group, and let $d>1$.
Then 
\[\sum_{C \in \ch(G)} (-1)^{|C|}|G_C|k_d(G_C)=0.\]
\end{thmA}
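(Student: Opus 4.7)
\emph{Plan.} The argument would proceed by induction on $|G|$, using the fact that a nontrivial $\pi$-separable group has either $O_\pi(G) \neq 1$ or $O_{\pi'}(G) \neq 1$. The base case $G=1$ is vacuous, since $k_d(1)=0$ for $d>1$.

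\emph{Case $O_\pi(G) \neq 1$.} Set $Q := O_\pi(G)$. Here one would construct a standard Kn\"orr--Robinson-style fixed-point-free involution $\sigma$ on $\ch(G)$, obtained by inserting or deleting into each chain a term built from $Q$ and the existing $P_i$ (e.g.\ locating the first $i$ with $QP_i \neq P_i$, using the convention $P_{n+1}:=G$). Because $Q$ is $G$-invariant, every such term is automatically stabilized by $G_C$, so that $G_{\sigma(C)} = G_C$ and hence $k_d(G_{\sigma(C)})=k_d(G_C)$, while $|\sigma(C)| = |C| \pm 1$. Paired chains then contribute with opposite signs, and the full sum vanishes.

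\emph{Case $O_\pi(G) = 1$.} Then $K := O_{\pi'}(G) \neq 1$, and the Hall--Higman-type theorem for $\pi$-separable groups gives $C_G(K)\leq K$. For each chain $C$ of $\pi$-subgroups of $G$ with largest term $P$, the intersection $K \cap G_C$ equals $C_K(P)$ and is a $\pi'$-subgroup normal in $G_C$. We would apply Clifford theory: any $\chi \in \irr{G_C}$ lies above some $\theta \in \irr{C_K(P)}$, and because $\theta(1)$ is a $\pi'$-number, the $\pi$-part $\chi(1)_\pi$ is read off from the Clifford correspondent $\psi$ in the inertia group $T := I_{G_C}(\theta)$. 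One would then reorganize the alternating sum as a double sum over $G$-orbits of pairs $(C,\theta)$. When $\theta$ is not $G$-invariant, the inner alternating sum becomes a sum of the same shape over chains of $\pi$-subgroups of the proper subgroup $G_\theta < G$, so the induction hypothesis yields zero. When $\theta$ is $G$-invariant, the Isaacs--Navarro $\pi$-theory of character triples developed in \cite{IN,NS} produces an isomorphic triple living in a strictly smaller $\pi$-separable group, again closing the induction.

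The main obstacle I anticipate is the bookkeeping of the defect parameter $d$ through this reduction: the defining identity $|G_C|_\pi = d\chi(1)_\pi$ involves both the group order and the character degree, and both sides transform nontrivially when one passes from $G_C$ to the inertia group $T$, to the subgroup $G_\theta$, or to a quotient modulo $C_K(P)$. The technical core of the proof will therefore be to show that the Clifford correspondence $\chi \leftrightarrow \psi$, combined with the character-triple isomorphism, preserves the index $d$ (possibly up to a controlled shift that is compensated by the reindexing of chains); this should be doable using the $\pi$-analogue of the Glauberman--Isaacs correspondence together with its compatibility with chains of $\pi$-subgroups, in the spirit of the authors' earlier work \cite{NS}.
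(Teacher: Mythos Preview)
Your overall strategy matches the paper's: the Kn\"orr--Robinson involution when $\OO_\pi(G)\neq 1$ is exactly Lemma~\ref{funct}, and the Clifford/Glauberman--Isaacs reduction through $K=\OO_{\pi'}(G)$ is the right move when $\OO_\pi(G)=1$. The gap is in the inductive setup.

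After the Clifford decomposition and the Glauberman--Isaacs step (which you need in order to parametrize everything by characters $\mu$ of the \emph{fixed} group $K$ rather than of the chain-dependent subgroup $\cent K{P}$), the terms that appear are $k_d(G_{C,\mu}K\mid\mu)$ for $\mu\in\Irr(K)$, not $k_d((G_\mu)_C)$. So the inner alternating sum over chains of $G_\mu$ is \emph{not} ``of the same shape'' as Theorem~A: it is the relative count of characters lying over a fixed $\mu$, and your induction hypothesis (Theorem~A for groups of smaller order) says nothing about such relative counts. Nor does your fix for the $G$-invariant case work: a character-triple isomorphism preserves $G/K$, hence does not decrease $|G|$. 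The paper resolves both issues at once by formulating and proving the stronger projective statement (Theorem~\ref{main}: a normal $\pi'$-subgroup $N$, a $G$-invariant $\theta\in\Irr(N)$, and the counts $k_d(G_CN\mid\theta)$) and inducting on $|G:N|$ rather than on $|G|$. The passage from $(G,N,\theta)$ with $N$ central to $(G_\mu,K,\mu)$ with $K>N$ strictly decreases this index, and Theorem~A is then the case $N=1$. The $d$-bookkeeping that worried you is in fact the easy part, handled by the degree-ratio property of character-triple isomorphisms together with Theorem~\ref{g-i}.
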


Unlike the original conjecture in the case $\pi=\{p\}$, we cannot restrict ourselves to so-called \emph{normal} chains in Theorem~A (see \cite[Theorem~9.16]{N}). In fact, $G=\sf{S}_3$ with $\pi=\{2,3\}$ is already a counterexample. (This is related to the fact that $\pi$-subgroups are not in
general  nilpotent!) For this reason,  the known proofs of the $p$-solvable case cited above cannot be carried over to $\pi$.
We will obtain Theorem~A as a special case of a more general projective statement with respect to normal subgroups.

\medskip

In this paper, let  $l(G)$  be  the number of conjugacy classes of $\pi'$-elements in $G$. Recall that $\chi \in \irr G$
has $\pi$-defect zero if $\chi(1)_p=|G|_p$ for all $p \in \pi$. The number of those characters is $k_1(G)$,
using  the notation above.

\begin{CorB}
Let $G$ be a $\pi$-separable group.
Then 
\[\sum_{C \in \ch(G)} (-1)^{|C|} \frac{|G_C|}{|G|}k(G_C)=\sum_{C \in \ch(G)} (-1)^{|C|} \frac{|G_C|}{|G|}l(G_C)\]
is the number of $\pi$-defect zero characters of $G$.
\end{CorB}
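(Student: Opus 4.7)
The plan is to derive both equalities from Theorem~A via the same scheme: partition a character count by $\pi$-defect and let Theorem~A kill every contribution with $d>1$. Since every $\chi\in\irr{H}$ satisfies $|H|_\pi=d\chi(1)_\pi$ for a unique $d\ge 1$, we have $k(H)=\sum_{d\ge 1}k_d(H)$. Summing Theorem~A over $d$ and dividing by $|G|$ yields
\[
\sum_{C\in\ch(G)}(-1)^{|C|}\frac{|G_C|}{|G|}k(G_C)=\sum_{C\in\ch(G)}(-1)^{|C|}\frac{|G_C|}{|G|}k_1(G_C).
\]
I then argue that $k_1(G_C)=0$ for every non-empty chain $C$, so that only the empty chain survives on the right and the sum equals $k_1(G)$, the number of $\pi$-defect zero characters. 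Indeed, the smallest subgroup $P$ appearing in $C$ is a non-trivial $\pi$-subgroup normal in $G_C$; given $\chi\in\irr{G_C}$ of $\pi$-defect zero and $\theta\in\irr{P}$ lying under $\chi$, Clifford theory gives $\chi(1)/\theta(1)\mid|G_C:P|$, hence $\chi(1)_\pi$ divides $\theta(1)_\pi\cdot|G_C|_\pi/|P|$. As $P$ is non-trivial, $\theta(1)^2\le|P|$ forces $\theta(1)<|P|$; being a $\pi$-number properly dividing $|P|$, there exists $q\in\pi$ with $\theta(1)\le|P|/q$, and therefore $\chi(1)_\pi\le|G_C|_\pi/q<|G_C|_\pi$, a contradiction. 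This proves the first equality.

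For the second equality I repeat the scheme using Isaacs' canonical subset $I_\pi(G_C)\subseteq\irr{G_C}$ associated to a $\pi$-separable group, whose cardinality equals $l(G_C)$. Stratifying by $\pi$-defect writes $l(H)=\sum_{d\ge 1}l_d(H)$ with $l_d(H):=|\{\phi\in I_\pi(H):|H|_\pi=d\phi(1)_\pi\}|$. What I need is the $I_\pi$-analog of Theorem~A, $\sum_C(-1)^{|C|}|G_C|\,l_d(G_C)=0$ for $d>1$, which should follow by running the projective induction that proves Theorem~A with Isaacs' $\pi$-theoretic Clifford correspondence in place of ordinary Clifford theory. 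The Clifford argument of the previous paragraph transfers to $I_\pi$, giving $l_1(G_C)=0$ for $C\ne\emptyset$, and since every $\pi$-defect zero $\chi\in\irr{G}$ automatically lies in $I_\pi(G)$, one has $l_1(G)=k_1(G)$; the second equality then follows just as the first one did.

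The main obstacle is establishing the $I_\pi$-analog of Theorem~A. If Theorem~A is proved in a sufficiently projective framework, its extension to $I_\pi$-characters should be routine via Isaacs' machinery. Failing that, one can bypass $I_\pi$-theory altogether by invoking the Knörr--Robinson form of the $\pi$-version of Alperin's Weight Conjecture established in \cite{NS}, which yields the second equality directly.
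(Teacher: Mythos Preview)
Your argument for the first equality is exactly the paper's: sum Theorem~A over all $d\ge 2$, and eliminate non-trivial chains by showing $k_1(G_C)=0$ via Clifford theory applied to the normal $\pi$-subgroup $P_1\unlhd G_C$.

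The second equality is where your proposal is incomplete. Your primary route requires an $I_\pi$-analogue of Theorem~A, which you do not prove; you only assert it ``should follow'' by re-running the inductive argument with Isaacs' $\pi$-theoretic Clifford machinery. But the proof of Theorem~A passes through the Glauberman--Isaacs correspondence and the associated character-triple isomorphisms, and checking that these steps respect the $I_\pi$-sets is a genuine task, not a formality. As written, this is a gap. Your fallback, invoking a ``Kn\"orr--Robinson form of the $\pi$-version of Alperin's Weight Conjecture established in \cite{NS}'', is also off target: \cite{NS} concerns weights attached to \emph{nilpotent} $\pi$-subgroups, and (as noted immediately after Corollary~B in this paper) there is no Alperin-style weight formula for general $\pi$-subgroups, so there is nothing of that shape to quote.

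The paper handles the second equality by a route independent of Theorem~A and of $I_\pi$-theory. In the $p$-case the identity $\sum_C(-1)^{|C|}|G_C|\,l(G_C)/|G|=k_1(G)$ is the \emph{unconditional} half of the Kn\"orr--Robinson analysis, and the paper simply observes that the proofs of \cite[9.18--9.23]{N} (which rest on Webb's theorem that the virtual character $\sum_C(-1)^{|C|}(1_{G_C})^G$ vanishes on $\pi$-singular elements) go through verbatim with $p$ replaced by $\pi$. No induction and no analogue of Theorem~A are needed for this half.
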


\section{Proofs}
We fix  a set of primes $\pi$ for the rest of the paper. If $G$ is a finite group,
 we consider chains $C$ of $\pi$-subgroups in $G$ of the form $1=P_0<P_1<\ldots<P_n$ where $n=0$ is allowed (the trivial chain). Let $|C|=n$ and let 
\[G_C=\N_G(P_0)\cap\ldots\cap\N_G(P_n)\]
be the stabilizer of $C$ in $G$. The set of all such chains of $G$ is denoted by $\ch(G)$. 

For a normal subgroup $N$ of $G$ and $\theta \in \irr N$, let $k_d(G|\theta)$ be the number of
irreducible characters $\chi$ of $G$ lying over $\theta$ with $|G|_\pi=d\chi(1)_\pi$.
We denote by $G_\theta$ the stabilizer of $\theta$ in $G$.  By the Clifford correspondence,
notice that 
\[k_d(G|\theta)=k_d(G_\theta|\theta).\]

We start with the following. 
\begin{lem}\label{funct}
Let $G$ be a finite group, and let $f$ be a real-valued function on the set of subgroups
of $G$. If $\OO_{\pi}(G)>1$, then
\[\sum_{C \in \ch(G)} (-1)^{|C|}|G_C|f(G_C)=0.\]
\end{lem}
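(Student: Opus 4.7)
The plan is to reduce, via linearity in $f$ and Möbius inversion on the subgroup lattice, to showing that for every $H\le G$ the order complex of $H$-invariant nontrivial $\pi$-subgroups of $G$ is contractible; this will follow from the closure operator $P\mapsto PQ$, where $Q:=\OO_\pi(G)$.

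Since $f$ is arbitrary I first group chains by their stabilizer:
\[ \sum_{C \in \ch(G)} (-1)^{|C|} |G_C| f(G_C) = \sum_{H \le G} |H| f(H)\, a(H), \qquad a(H) := \sum_{\substack{C\in\ch(G)\\ G_C = H}} (-1)^{|C|}, \]
so it suffices to show $a(H)=0$ for every $H$. Setting $b(H):=\sum_{C\colon G_C\ge H}(-1)^{|C|}$, one has $b(H)=\sum_{K\ge H}a(K)$, and Möbius inversion on the finite subgroup lattice reduces the problem to proving $b(H)=0$ for every $H$.

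Now the chains with $G_C\ge H$ are exactly the chains $1=P_0<\cdots<P_n$ in which each $P_i$ is $H$-invariant, i.e.\ chains starting at $1$ in the poset $\mathcal{P}_H$ of $H$-invariant $\pi$-subgroups of $G$. Letting $\Delta_H$ denote the order complex of $\mathcal{P}_H\setminus\{1\}$, a standard computation identifies $b(H)$ with $-\tilde{\chi}(\Delta_H)$, so it suffices to prove $\Delta_H$ is contractible. For this, consider the map $c\colon\mathcal{P}_H\setminus\{1\}\to\mathcal{P}_H\setminus\{1\}$ given by $c(P):=PQ$. Since $Q$ is normal in $G$ it is automatically $H$-invariant, and $PQ$ is a $\pi$-subgroup (product of two $\pi$-subgroups with $Q$ normal), so $c$ is well-defined; clearly $c$ is monotone, satisfies $P\subseteq c(P)$, and is idempotent, i.e.\ a closure operator. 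Its image $\{R\in\mathcal{P}_H\colon R\supseteq Q\}$ has $Q$ as its unique minimum, so its order complex is a cone on $Q$ and is contractible. By the standard closure-operator argument (a form of Quillen's Theorem A), $\Delta_H$ is homotopy equivalent to this image and is therefore contractible.

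The main obstacle is that a more elementary approach---constructing an explicit sign-reversing involution on $\ch(G)$ preserving stabilizers by inserting or deleting $Q$ at a canonical position in the chain---runs into trouble when $Q$ is incomparable to some $P_i$, so one cannot just insert $Q$. Working at the level of Euler characteristics via the closure operator $P\mapsto PQ$ cleanly sidesteps this comparability issue.
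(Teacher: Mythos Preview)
Your argument is correct: the reduction to $a(H)=0$ via the indicator trick, Möbius inversion to $b(H)=0$, the identification $b(H)=-\tilde\chi(\Delta_H)$, and the contraction of $\Delta_H$ via the closure operator $P\mapsto PQ$ are all sound (and $\Delta_H$ is never empty since $Q\in\mathcal P_H\setminus\{1\}$). This is, however, a genuinely different and considerably heavier route than the paper's.

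The paper proves the lemma by a direct sign-reversing involution on $\ch(G)$ that preserves stabilizers. With $N:=\OO_\pi(G)$: if $N\nsubseteq P_n$, append $NP_n$; otherwise locate the first index $k$ with $N\subseteq P_k$ and either delete $P_k$ (when $P_{k-1}N=P_k$) or insert $P_{k-1}N$ between $P_{k-1}$ and $P_k$. One checks $(C^*)^*=C$, $|C^*|=|C|\pm1$, and $G_{C^*}=G_C$. So the obstacle you describe in your last paragraph is not real: one never tries to insert $N$ itself, but rather the product $P_iN$, which is automatically comparable within the chain. Your closure-operator approach is perfectly valid and has the merit of connecting the lemma to the topology of subgroup complexes (indeed it is essentially the Quillen conical-contractibility argument), but it brings in Möbius functions on the subgroup lattice and homotopy equivalences where a two-line combinatorial pairing suffices.
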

\begin{proof}
Let $C:1=P_0<\ldots<P_n$ be a chain in $\ch(G)$. If $N=\OO_\pi(G)\nsubseteq P_n$, we obtain $C^*\in\ch(G)$ from $C$ by adding $NP_n$ at the end which is still a $\pi$-group. Otherwise let $N\subseteq P_k$ and $N\nsubseteq P_{k-1}$. If $P_{k-1}N=P_k$, then we delete $P_k$, otherwise we add $P_{k-1}N$ between $P_{k-1}$ and $P_k$. It is easy to see that in all cases $|C^*|=|C|\pm1$, $(C^*)^*=C$ and $G_C=G_{C^*}$. Hence, the map $C\mapsto C^*$ is a bijection on $\ch(G)$ such that
\begin{align*}
\sum_{C\in\ch(G)}(-1)^{|C|}|G_C|f(G_C)&=\sum_{C\in\ch(G)}(-1)^{|C^*|}|G_{C^*}|f(G_{C^{*}})\\
&=-\sum_{C\in\ch(G)}(-1)^{|C|}|G_C|f(G_C)=0.\qedhere
\end{align*}
\end{proof}

It is obvious that $G$ acts by conjugation on $\ch(G)$. The set of $G$-orbits is denoted by $\ch(G)/G$ in the following.
If $K \nor G$, notice that $G$ also acts on $\ch(G/K)$.

\begin{lem}\label{chainsmod}
Let $G$ be  a finite group
 with a normal $\pi'$-subgroup $K$. Let $\overline{H}:=HK/K$ for $H\le G$. 
 \begin{enumerate}[(a)]
 \item
 The map
$\ch(G)\mapsto \ch(\overline{G})$ given by 
\[C:P_0<\ldots<P_n \mapsto\overline{C}:\overline{P_0}<\ldots<\overline{P_n}\]
induces a bijection $\ch(G)/G\to\ch(\overline{G})/\overline{G}$. 

\item
For $\overline{C}\in\ch(\overline{G})$,
we have that $\overline{G}_{\overline{C}}=G_{\overline{C}}/K=G_CK/K$.

\item
Let $f$ be a real-valued function on the set of subgroups of $G$ such that $f(H)=f(H^g)$ for all $H\le G$ and $g\in G$. Then
\[\sum_{C\in\ch(G)}(-1)^{|C|}|G_C|f(G_CK)=\sum_{\overline{C}\in\ch(\overline{G})}(-1)^{|\overline{C}|}|G_{\overline{C}}|f(G_{\overline{C}}).\]
\end{enumerate}
\end{lem}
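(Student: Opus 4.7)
The plan is to prove (a), (b), (c) in sequence, with Schur-Zassenhaus applied to $K \nor M_i := P_i K$ as the main tool. Since each $P_i$ is a $\pi$-group and $K$ is a $\pi'$-group, $P_i \cap K = 1$, so $P_i \cong \overline{P_i}$; in particular $\overline{C}$ is a strict chain of $\pi$-subgroups with $|\overline{C}| = |C|$, and the map is manifestly $G$-equivariant, hence descends to orbit spaces.

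For (a), I would prove surjectivity by explicit lift. Given $\overline{C}: 1 < \overline{Q_1} < \ldots < \overline{Q_n}$ in $\ch(\overline{G})$, let $M_i$ denote the preimage of $\overline{Q_i}$ in $G$, so $M_i/K$ is a $\pi$-group. Apply Schur-Zassenhaus to pick a $\pi$-Hall subgroup $P_n$ of $M_n$, and set $P_i := M_i \cap P_n$ for $i < n$. A routine coprime-index calculation (using that $[M_n:M_i]$ is a $\pi$-number while $[M_n:P_n] = |K|$ is a $\pi'$-number) gives $|M_i \cap P_n| = |\overline{Q_i}|$, so $P_i$ is a $\pi$-Hall of $M_i$ projecting onto $\overline{Q_i}$, yielding $C \in \ch(G)$ with image $\overline{C}$. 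For injectivity modulo conjugation, if $\overline{C_1^g} = \overline{C_2}$ after lifting a conjugator $\overline{g}$ to $g \in G$, both chains lift the same $\overline{C_2}$, and Schur-Zassenhaus produces $k \in K$ with $(P_n^g)^k = Q_n$; the intersections with $M_i$ then move the entire chain at once.

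For (b), the equality $\overline{G}_{\overline{C}} = G_{\overline{C}}/K$ is immediate from the definition of $G_{\overline{C}}$ as the preimage of $\overline{G}_{\overline{C}}$. The substantive content is $G_{\overline{C}} = G_C K$; the inclusion $\supseteq$ is routine. For $\subseteq$, I would use the freedom from (a) to take the orbit representative $C$ in the coordinated form $P_i = M_i \cap P_n$. Given $g \in G_{\overline{C}}$, Schur-Zassenhaus inside $M_n$ produces $k \in K$ with $P_n^g = P_n^k$; setting $g' := gk^{-1}$, we have $g' \in N_G(P_n)$. Since $g$ normalizes each $M_i$ and $k^{-1} \in K \le M_i$, we obtain $M_i^{g'} = M_i$, whence $P_i^{g'} = (M_i \cap P_n)^{g'} = M_i \cap P_n = P_i$ for every $i$. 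Thus $g' \in G_C$ and $g = g'k \in G_C K$.

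Finally, (c) is bookkeeping from (a) and (b). Both sides rewrite as $|G|$ times a sum over orbits of chains, using $|G_{\overline{C}}| = |K|\cdot|\overline{G}_{\overline{C}}|$ together with $|\overline{G}| = |G|/|K|$; the bijection from (a), the identity $G_C K = G_{\overline{C}}$ from (b), and the $G$-invariance of $f$ then match the two sums termwise. The main obstacle is the synchronization in (b): applying Schur-Zassenhaus independently to each $P_i$ would produce different adjusting elements $k_i \in K$, and merging them into a single $k$ is not straightforward. The coordinated choice $P_i = M_i \cap P_n$ from (a) is precisely what lets one $k$ conjugate every $P_i^g$ back to $P_i$ simultaneously.
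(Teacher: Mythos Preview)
Your proof is correct and, for parts (a) and (c), essentially matches the paper's argument. The one genuine divergence is in (b).

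The paper proves (b) via a small structural lemma: for $\pi$-subgroups $P_1<P_2$ one has
\[
\N_G(P_1)K\cap\N_G(P_2)K=(\N_G(P_1)\cap\N_G(P_2))K,
\]
and then combines this (iterated along the chain) with the Frattini identity $\overline{\N_G(P_i)}=\N_{\overline G}(\overline{P_i})$ to get $\overline{G_C}=\overline G_{\overline C}$ for \emph{every} lift $C$. Your route is more direct: you adjust $g\in G_{\overline C}$ by a single $k\in K$ coming from Schur--Zassenhaus conjugacy on the top member $P_n$, and then the identities $P_i=M_i\cap P_n$ force $g'=gk^{-1}$ to normalize all $P_i$ at once. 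Your approach is slicker for this specific lemma; the paper's intersection identity is a reusable fact that makes the dependence on the chain more transparent.

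One small simplification of your write-up: the ``coordinated form'' $P_i=M_i\cap P_n$ is not a special choice you need to arrange via (a); it holds automatically for \emph{any} chain $C$ with $\overline C$ as image. Indeed $P_i\le M_i\cap P_n$, and $M_i\cap P_n$ is a $\pi$-subgroup of $M_i=P_iK$ of order at most $|M_i|_\pi=|P_i|$, so equality is forced. Hence your argument for (b) already works for an arbitrary lift $C$, and the ``main obstacle'' you flag at the end is not actually present.
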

\begin{proof}
First,  we notice that the map
$\ch(G)\to\ch(\overline{G})$ given by  $C\mapsto\overline{C}$ is surjective.
Indeed, suppose that $\overline{C}:1=U_0/K < \ldots <U_n/K$ is a chain of $\pi$-subgroups of
$G/K$. By the Schur--Zassenhaus theorem, we have that
$U_n=KP_n$ for some $\pi$-subgroup $P_n$
of $G$. Then $U_i=K(U_i \cap P_n)$, and therefore the chain
$1=U_0\cap P_n< \ldots < P_n$ maps to $\overline{C}$. 

If chains $C:P_0<\ldots<P_n$ and $D:Q_0<\ldots<Q_n$ are conjugate in $G$, then $\overline{C}$ and $\overline{D}$ are obviously conjugate in $\overline{G}$. Suppose conversely that $\overline{C}$ and $\overline{D}$ are $\overline{G}$-conjugate. Without loss of generality,
we may assume that  $P_iK=Q_iK$ for $i=0,\ldots,n$. Again by the Schur--Zassenhaus theorem (this time relying on the Feit--Thompson theorem), $P_n$ is conjugate to $Q_n$ by some $x\in K$. We still have $P_i^xK=Q_iK$ for $i=0,\ldots,n$. Since $P_i^x,Q_i\le Q_n$ it follows that $P_i^x=Q_i$ for $i=0,\ldots,n$. Hence, $C$ and $D$ are $G$-conjugate. 
This proves (a).

Suppose that $P_1<P_2$ are $\pi$-subgroups of $G$. We claim that
\[\norm G{P_1} K \cap \norm G{P_2}K=(\norm G{P_1} \cap \norm G{P_2})K.\]
If $x \in \norm G{P_1} K \cap \norm G{P_2}K$, then $P_2^x=P_2^k$ for some $k \in K$.
Therefore $xk^{-1} \in \norm G{P_2}\cap \norm G{P_1}K$. Since $P_1K \cap P_2=P_1$,
we have that $xk^{-1} \in \norm G{P_1}$, and therefore $x \in (\norm G{P_1} \cap \norm G{P_2})K$.
This proves the claim.

Suppose now that
$C:P_0<\ldots<P_n$ is a chain of $\pi$-subgroups of $G$.
By the Frattini argument, $\overline{\N_G(P_i)}=\N_{\overline{G}}(\overline{P_i})$ and therefore $\overline{G_C}=\overline{G}_{\overline{C}}$, using the last paragraph. 
Regarding the action of $G$ on $\ch(\overline{G})$ we also have $G_{\overline{C}}/K=\overline{G}_{\overline{C}}$.

Finally, we prove (c). The $G$-orbit of $C$ has size $|G:G_C|$, while the $\overline{G}$-orbit of $\overline{C}$ has size $|\overline{G}:\overline{G}_{\overline{C}}|=|G:G_{\overline{C}}|$. Let $C_1,\ldots,C_k$ be representatives for $\ch(G)/G$, so that
$\overline{C_1},\ldots,\overline{C_k}$ are representatives
for  $\ch(\overline{G})/\overline{G}$
. Then
\[\sum_{C\in\ch(G)}(-1)^{|C|}|G_C|f(G_CK)=|G|\sum_{i=1}^k (-1)^{|C_i|}f(G_{C_i}K)=\sum_{\overline{C}\in\ch(\overline{G})}(-1)^{\overline{C}}|G_{\overline{C}}|f(G_{\overline{C}}).\qedhere\]
\end{proof}

The deep part of our results comes from the ``above Glauberman--Isaacs correspondence" theory.
If $A$ is a solvable finite group, acting coprimely on $G$, recall that Glauberman
discovered a natural bijection $^*$ from ${\rm Irr}_A(G)$, the set of $A$-invariant irreducible
characters of $G$, and $\irr{\cent GA}$, the irreducible characters of the fixed-point subgroup.
The case where $A$ is a $p$-group is fundamental in the local/global counting conjectures.
If $A$ is not solvable, an important case in this paper, then $G$ has odd order by the Feit-Thompson
theorem. In this case, Isaacs~\cite{I} proved that there is also a natural bijection ${\rm Irr}_A(G) \rightarrow
\irr{\cent GA}$. T. R. Wolf~\cite{Wo} proved that both correspondences agree in the intersection
of their hypotheses. 

\begin{thm}\label{g-i}
Let $G$ be a finite group with a normal $\pi'$-subgroup $K$. Let $C\in\ch(G)$ with last subgroup $P_C=P_{|C|}$. Let $\tau \in \irr K$ be $P_C$-invariant, and let $\tau^* \in \irr{\cent K{P_C}}$ be its Glauberman--Isaacs
correspondent. Then
\[k_d(G_CK|\tau)=k_d(G_C|\tau^*)\]
for every integer $d$.
\end{thm}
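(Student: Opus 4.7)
The plan is to reduce by Clifford's theorem to the case when $\tau$ is $G_C$-invariant, and then invoke an ``above Glauberman--Isaacs correspondence'' producing a $\pi$-degree-preserving bijection between the two character sets.

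First, every $g \in G_C$ normalizes both $K$ (as $K \nor G$) and $P_C$ (by the definition of $G_C$), and hence induces an automorphism of the coprime action of $P_C$ on $K$. The Glauberman--Isaacs bijection is equivariant under such automorphisms, so the stabilizer $H := (G_C)_\tau$ coincides with $(G_C)_{\tau^*}$. Writing $L := G_CK$, the stabilizer of $\tau$ in $L$ is $HK$, and the Clifford-correspondence observation recorded in the paper gives $k_d(L \mid \tau) = k_d(HK \mid \tau)$ and $k_d(G_C \mid \tau^*) = k_d(H \mid \tau^*)$. We therefore may assume that $\tau$ is already $G_C$-invariant, i.e.\ $H = G_C$.

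Next, I would pin down the relevant quotient. By coprime action of each $\pi$-group $P_i$ on the $\pi'$-group $K$, $\norm K{P_i} = \cent K{P_i}$; since the $P_i$ form an ascending chain with top $P_C$, $\cent K{P_i} \supseteq \cent K{P_C}$ with equality at $i=n$, and so
\[
G_C \cap K \;=\; \bigcap_{i=0}^{n} \norm K{P_i} \;=\; \bigcap_{i=0}^{n} \cent K{P_i} \;=\; \cent K{P_C}.
\]
Therefore $G_CK/K \cong G_C/\cent K{P_C}$, and the two character triples $(G_CK, K, \tau)$ and $(G_C, \cent K{P_C}, \tau^*)$ have canonically isomorphic quotients.

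The heart of the argument is then to upgrade the single correspondence $\tau \leftrightarrow \tau^*$ into a full isomorphism of these two character triples, using the ``above Glauberman--Isaacs'' theory developed by Isaacs, Wolf, Navarro and Turull (cf.\ the inputs used in \cite{NS} and \cite{T17}). Under the compatible-stabilizer condition from the first step, this yields a bijection $\irr{G_CK\mid\tau} \to \irr{G_C\mid\tau^*}$ in which corresponding characters $\chi \leftrightarrow \chi^*$ satisfy $\chi(1)/\tau(1) = \chi^*(1)/\tau^*(1)$. Since $K$ is a $\pi'$-group, $\tau(1)$ and $\tau^*(1)$ are $\pi'$-numbers, so $\chi(1)_\pi = \chi^*(1)_\pi$; combined with $|G_CK|_\pi = |G_C|_\pi$, this shows the bijection preserves $\pi$-defect, and restricting to defect $d$ gives the theorem. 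The main obstacle is precisely this third step: producing a genuine character-triple isomorphism -- not merely the G--I bijection on the bottom -- that is canonical enough to be compatible with the $G_C$-action used in the Clifford reduction. Everything else is bookkeeping around Clifford's theorem and standard coprime-action facts.
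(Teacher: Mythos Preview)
Your proposal is correct and matches the paper's approach essentially step for step: equivariance of the Glauberman--Isaacs map to equate stabilizers, Clifford reduction to the invariant case, the identification $G_C\cap K=\cent K{P_C}$, and then an ``above Glauberman--Isaacs'' character-triple isomorphism yielding the degree-ratio-preserving bijection. For the step you flag as the obstacle, the paper proceeds by embedding both sides in the ambient group $U=K(P_CG_C)$ (so that $KP_C\nor U$ and $\norm U{P_C}=P_CG_C\cent K{P_C}$), establishes the triple isomorphism $(U,K,\tau)\cong(\norm U{P_C},\cent K{P_C},\tau^*)$ by citing Dade--Puig/Turull when $P_C$ is solvable and Isaacs/Lewis when $P_C$ is non-solvable (forcing $|K|$ odd by Feit--Thompson), and then restricts to the sub-triples $(G_CK,K,\tau)$ and $(G_C,\cent K{P_C},\tau^*)$.
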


\begin{proof}
Let $U=K(P_CG_C)$. Notice that $G_C \cap K=\cent K{P_C}$. 
Also, $KP_C \nor U$. Thus $U=K\norm U{P_C}$, by the Frattini argument and the
Schur--Zassenhaus theorem.  Also, 
\[\norm U{P_C}=\norm G{P_C}\cap (P_CG_C)K=
(P_CG_C)\norm K{P_C}=P_CG_C\cent K{P_C}.\]  
Since $G_C$ normalizes $P_C$,
we have that $G_C$ commutes with the
$P_C$-Glauberman--Isaacs correspondence.  In particular,
$$(G_C)_{\tau^*}=(G_C)_\tau \, .$$
Hence, by using the Clifford correspondence,
we may assume that $\tau$ is $G_C$-invariant (and therefore $U$-invariant) and
that $\tau^*$ is $G_C$-invariant too.

Now, we claim that the character triples
$(U,K, \tau)$ and $(\norm U{P_C}, \cent K{P_C}, \tau^*)$ are isomorphic.
If $P_C$ is solvable, this is a well-known fact which follows from the Dade--Puig theory. (A comprehensive proof is given in \cite{T08}.)  If $P_C$ is not solvable, then
$|K|$ is odd, by the Feit--Thompson theorem. Then the claim follows from
the theory developed by Isaacs in \cite{I}. (A proof is given in the last paragraphs of \cite{L}.)

Since the character triples
$(U,K, \tau)$ and $(\norm U{P_C}, \cent K{P_C}, \tau^*)$ are isomorphic, it follows
from the definition that the sub-triples $(G_CK, K, \tau)$ and $(G_C, \cent K{P_C}, \tau^*)$
are isomorphic too. This yields a bijection $\Irr(G_CK|\tau)\to\Irr(G_C|\tau^*)$, $\chi\mapsto\chi^*$ such that $\chi(1)/\tau(1)=\chi^*(1)/\tau^*(1)$ (see \cite[p. 87]{N}). In particular, $k_d(G_CK|\tau)=k_d(G_C|\tau^*)$ (if $d_\pi\ne d$, both numbers are $0$).
\end{proof}

Theorem~A is the special case $N=1$ of the following projective version.

\begin{thm}\label{main}
Let $G$ be a $\pi$-separable group with a normal $\pi'$-subgroup $N$. Let $\theta \in \irr N$ be $G$-invariant and $d>1$.
Then
\[\sum_{C \in \ch(G)} (-1)^{|C|}|G_C|k_d(G_CN|\theta)=0.\]
\end{thm}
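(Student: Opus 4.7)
I would argue by induction on $|G|$, with $G = 1$ as the trivial base. If $\OO_\pi(G) > 1$, I apply Lemma \ref{funct} to the function $f(H) := k_d(HN|\theta)$ on subgroups of $G$ (well-defined since $N \nor G$ and $\theta$ is $G$-invariant) to conclude immediately. So assume $\OO_\pi(G) = 1$; since $G$ is nontrivial and $\pi$-separable, $K := \OO_{\pi'}(G) > 1$ with $N \le K$.

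I would first reduce to $N = K$ via a Clifford-type refinement of $k_d(G_CN|\theta)$ over the characters of $K$ lying above $\theta$, using Theorem \ref{g-i} to interpret the resulting subsums on the centralizer side $\cent K{P_C}$. Non-$G$-invariant orbits of such characters give subsums of the same shape for proper stabilizer subgroups $G_\eta < G$, which vanish by induction on $|G|$; $G$-invariant extensions let me replace $(N, \theta)$ by $(K, \eta)$ via an auxiliary induction on $|K : N|$. This is the most delicate step: since $K/N$ need not be coprime to $N$, standard Clifford extension can fail, and the above--Glauberman--Isaacs correspondence of Theorem \ref{g-i} is precisely the tool that gives a uniform treatment of both the solvable and Feit--Thompson cases.

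For the remaining case $N = K$, apply Lemma \ref{chainsmod}(c) with the $G$-class-invariant function $f(H) := k_d(HN|\theta)$: this transports the sum to
\[
S = \sum_{\overline C \in \ch(\overline G)} (-1)^{|\overline C|}|G_{\overline C}|k_d(G_{\overline C}|\theta), \qquad \overline G := G/N.
\]
Now $\overline G$ is $\pi$-separable with $\OO_{\pi'}(\overline G) = 1$, forcing $\OO_\pi(\overline G) > 1$ (any first nontrivial factor of a $\pi$-separability series of $\overline G$ must be a $\pi$-group, else it would be absorbed into $\OO_{\pi'}(\overline G)$). Applying Lemma \ref{funct} to $\overline G$ with $\tilde f(\overline H) := k_d(H|\theta)$, and noting that $|G_{\overline C}| = |N| \cdot |\overline G_{\overline C}|$, yields $S = 0$. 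The main obstacle is the $N < K$ reduction described above, where the failure of coprimality between $|K/N|$ and $|N|$ is what forces one to invoke Theorem \ref{g-i} rather than elementary Clifford extension.
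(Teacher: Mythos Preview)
Your proposal has a genuine gap in the central reduction step, and it also diverges from the paper's argument in ways worth noting.

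\textbf{The gap.} You write that you will ``reduce to $N=K$ via a Clifford-type refinement of $k_d(G_CN|\theta)$ over the characters of $K$ lying above $\theta$''. But $K$ is not contained in $G_CN$ in general (only $G_CN\cap K=\cent K{P_C}\,N$), so there is no direct Clifford decomposition of $\Irr(G_CN|\theta)$ indexed by $\Irr(K|\theta)$. The paper deals with exactly this obstruction by first replacing the character triple $(G,N,\theta)$ by an isomorphic one in which $N$ is a \emph{central} $\pi'$-subgroup. Once $N$ is central one has $N\le G_C$, hence $G_CN=G_C$, and the refinement is carried out over the genuinely normal subgroup $\cent K{P_C}=G_C\cap K$; only then does Theorem~\ref{g-i} convert the index set into $\Irr_{P_C}(K|\theta)$, allowing the interchange of sums. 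Without the centralisation step (or an explicit ``going down'' compatibility of the Glauberman--Isaacs map with the subgroup $N\le K$, which you neither state nor prove), your refinement does not parse. Relatedly, the paper runs the induction on $|G:N|$ rather than on $|G|$; this is what makes the inner sum for a $G$-invariant $\mu\in\Irr(K|\theta)$ fall to induction, since $|G:K|<|G:N|$ even when $G_\mu=G$, and it obviates your auxiliary induction on $|K:N|$.

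\textbf{A confused remark.} Your sentence ``since $K/N$ need not be coprime to $N$, standard Clifford extension can fail'' is off target: both $K$ and $N$ are $\pi'$-groups, and nothing in the argument hinges on $(|K/N|,|N|)$. The coprimeness that actually matters, and that drives Theorem~\ref{g-i}, is between the $\pi$-group $P_C$ acting and the $\pi'$-group $K$ acted upon.

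\textbf{Where you differ but are fine.} Your treatment of the case $N=K$ (pass to $\overline G=G/K$, observe $\OO_{\pi'}(\overline G)=1$ forces $\OO_\pi(\overline G)>1$, then apply Lemma~\ref{funct}) is correct and is a legitimate alternative to the paper's use of Hall--Higman~1.2.3, which in the paper is available precisely because $N$ has already been made central.
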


\begin{proof} 
We may assume that $d_\pi=d$. 
We argue by induction on $|G:N|$. Let $\overline{G}=G/N$. By Lemma~\ref{chainsmod}, we may sum over $\overline{C}\in\ch(\overline{G})$ by replacing $G_C$ with $G_{\overline{C}}$. 
Recall that a character triple isomorphism $(G,N,\theta)\to(G^*,N^*,\theta^*)$ induces an isomorphism $\overline{G}\cong G^*/N^*$ and a bijection $\Irr(G|\theta)\to\Irr(G^*|\theta^*)$, $\chi\mapsto\chi^*$ such that $\chi(1)/\theta(1)=\chi^*(1)/\theta^*(1)$. Thus, $k_d(G_{\overline{C}}|\theta)=k_d(G^*_{\overline{C}^*}|\theta^*)$ and the numbers $|G_{\overline{C}}|$, $|G^*_{\overline{C}^*}|$ differ only by a factor independent of $C$. This allows us to replace $N$ by $N^*$. Using \cite[Corollary~5.9]{N}, we assume that $N$ is a central $\pi'$-subgroup in the following. 
Now using Lemma~\ref{chainsmod} in the opposite direction, we sum over $C\in\ch(G)$ again and note that $N\subseteq G_C$.
Thus, it suffices to show that 
\[\sum_{C \in \ch(G)} (-1)^{|C|}|G_C|k_d(G_C|\theta)=0.\]

By Lemma \ref{funct}, we may assume that $\oh \pi G=1$.
Let $K=\oh{\pi'} G$. If $K=N$, then $N=G$ by the Hall--Higman 1.2.3~lemma.
In this case, the theorem is correct because $d>1$. 
So we may assume that $K>N$. Let $P_C$ be the last member of $C\in\ch(G)$. Observe that $G_C\cap K=\cent K{P_C}$. 
Each $\psi\in\Irr(G_C|\theta)$ lies over some $\mu\in\Irr(\cent K{P_C}|\theta)$. But $\psi$ lies also over $\mu^g$ for every $g\in G_C$. Therefore,
\[
\sum_{C \in \ch(G)} (-1)^{|C|} {|G_C|}|k_d({G_C |\theta})=\sum_{C \in \ch(G)} (-1)^{|C|} {|G_C|} 
\Bigl(\sum_{\mu \in \irr{\cent K{P_C}|\theta}}\frac{ k_d(G_C | \mu )}{ |G_C:G_{C, \mu}|}\Bigr)\]
where $G_{C,\mu}=G_C\cap G_\mu$. 
According to Theorem~\ref{g-i}, we replace $\irr{\cent K{P_C}|\theta}$ by $\Irr_{P_C}(K|\theta)$ and $k_d(G_C | \mu )$ by $k_d(G_CK |\mu)$. By the Clifford correspondence, $k_d(G_CK |\mu)=k_d(G_{C,\mu}K|\mu)$.
Moreover, $\mu \in \Irr_{P_C}(K|\theta)$ implies $P_C\le G_\mu$. Thus, for a fixed $\mu$ we only need to consider chains in $G_\mu$. Hence,
\[\sum_{C \in \ch(G)} (-1)^{|C|} {|G_C|}|k_d({G_C |\theta})=\sum_{\mu \in \Irr(K|\theta)}
\Bigl(\sum_{C \in \ch(G_\mu)} (-1)^{|C|} |G_{C, \mu}| k_d(G_{C,\mu}K | \mu )\Bigr).\]
Since $|G_\mu:K|<|G:N|$, the inner sum vanishes for every $\mu$ by induction. Hence, we are done.
\end{proof}

Finally, we come to our second result.

\begin{proof}[Proof of Corollary~B]
Let $C:P_0<\ldots<P_n$ in $\ch(G)$ such that $n>0$. Then $P_1\unlhd G_C$. Let $\chi\in\Irr(G_C)$ and $\theta\in\Irr(P_1)$ under $\chi$. By Clifford theory, $\chi(1)/\theta(1)$ divides $|G_C/P_1|$ (see \cite[Theorem~5.12]{N}). Since $\theta(1)<|P_1|$, it follows that $\chi(1)_\pi<|G_C|_\pi$ and $k_1(G_C)=0$. Summing over $d\ge 1$ in Theorem~A yields
\[\sum_{C \in \ch(G)} (-1)^{|C|} \frac{|G_C|}{|G|}k(G_C)=k_1(G).\]
The second equality follows from a straight-forward generalization of the Kn\"orr--Robinson argument. In fact, the proofs of \cite[9.18--9.23]{N} go through word by word (replacing $p$ by $\pi$, of course). 
\end{proof} 

Given the proof above, we take the opportunity to point out that a theorem of Webb~\cite{We} (see also \cite[Corollary~9.20]{N}) remains true in the $\pi$-setting:

\begin{thm}
Let $G$ be an arbitrary finite group, and let $\pi$ be a set of primes.
Then the generalized character
\[\sum_{C\in\ch(G)}(-1)^{|C|}|G_C|(1_{G_C})^G\]
vanishes on all elements of $G$ whose order is divisible by a prime in $\pi$.
\end{thm}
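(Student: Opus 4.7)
The plan is to reduce the statement to a combinatorial identity about $g$-invariant chains of $\pi$-subgroups, and then to apply the toggling involution from Lemma~\ref{funct}.

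Fix $g\in G$ whose order is divisible by some prime in $\pi$. By the formula for an induced character,
\[|G_C|(1_{G_C})^G(g)=|\{x\in G:x^{-1}gx\in G_C\}|,\]
so interchanging the order of summation yields
\[\sum_{C\in\ch(G)}(-1)^{|C|}|G_C|(1_{G_C})^G(g)=\sum_{x\in G}\sum_{\substack{C\in\ch(G)\\x^{-1}gx\in G_C}}(-1)^{|C|}.\]
For fixed $x\in G$, conjugation by $x$ furnishes a length-preserving bijection between chains of $\pi$-subgroups normalized member-wise by $x^{-1}gx$ and those normalized by $g$; hence the inner sum depends only on $g$, and it suffices to prove
\[\sum_{\substack{C\in\ch(G)\\g\in G_C}}(-1)^{|C|}=0.\]

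For the combinatorial step I invoke the hypothesis. Let $g_\pi$ denote the $\pi$-part of $g$; since $|g|$ is divisible by a prime in $\pi$, $g_\pi\ne 1$, and $N:=\langle g_\pi\rangle$ is a nontrivial $\pi$-subgroup that is centralized, in particular normalized, by $g$. If $g\in G_C$ for $C:1=P_0<\ldots<P_n$, then $g$ normalizes each $P_i$, and because $N\subseteq\langle g\rangle$ so does $N$. Hence $NP_i$ is a $\pi$-subgroup and, being a product of two subgroups both normalized by $g$, is itself normalized by $g$. So $N$ plays exactly the role of $\OO_\pi(G)$ in Lemma~\ref{funct}, but restricted to the subposet of $g$-invariant $\pi$-subgroups. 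I then define the analogous toggling map $C\mapsto C^*$: if $N\not\subseteq P_n$, I append $NP_n$ to $C$; otherwise, letting $k$ be the least index with $N\subseteq P_k$, I delete $P_k$ when $NP_{k-1}=P_k$, and insert $NP_{k-1}$ between $P_{k-1}$ and $P_k$ otherwise.

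The verification that $C\mapsto C^*$ is a fixed-point-free involution on the set of $g$-invariant chains, with $|C^*|=|C|\pm 1$, is identical to that in Lemma~\ref{funct}. Pairing chains of opposite parity forces the alternating sum to vanish. The only place where the hypothesis on the order of $g$ intervenes is in ensuring $N\ne 1$; once this is granted, the rest is a verbatim reprise of Lemma~\ref{funct}. I do not anticipate a serious obstacle: the conceptual content of the argument lies entirely in the recognition that the $\pi$-part of $g$ supplies the nontrivial $\pi$-subgroup needed to run the toggle within the $g$-invariant subposet, and the passage from the character-theoretic statement to the combinatorial one via Frobenius reciprocity and conjugation is standard.
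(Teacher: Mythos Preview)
Your argument is correct and is precisely the standard Webb-type proof that the paper has in mind: the paper does not spell out a proof but simply remarks that the $p$-case argument (as in \cite{We} and \cite[Corollary~9.20]{N}) carries over verbatim upon replacing $p$ by $\pi$, and your reduction via the induced-character formula followed by the toggle using $N=\langle g_\pi\rangle$ is exactly that argument.
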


Unlike the case where $\pi=\{p\}$, Alperin's weight conjecture cannot be deduced from Corollary~B. As a matter of fact, for $\pi$-separable groups $G$, we can prove that there is no formula of the form
\[l(G)=\sum_{P}\alpha_Pk_1(\norm{G}{P}/P)\]
where $P$ runs through the $G$-conjugacy classes of $\pi$-subgroups and the coefficients $\alpha_P\ge 0$ depend only on the isomorphism type of $P$.

\medskip

It is interesting to speculate on variations of Theorem A, that is
projective versions of Dade's conjecture, that might be even true
for arbitrary normal subgroups of any finite group $G$, whenever $\pi=\{p\}$.
Outside $\pi$-separable groups, we do not know what is the meaning, if any, of
the number $\mu_\pi(G)$. In fact, this number can even be negative in groups
with a Hall $\pi$-subgroup. We have not attempted a block version of Theorem A.
Although   $\pi$-block theory is well-developed in $\pi$-separable
groups (see \cite{Sl}, for instance), Brauer's block induction does not behave well if Hall $\pi$-subgroups
are not nilpotent. 

\medskip

Computations with chains are almost impossible to do by hand. The results of this paper would not have been discovered without the help of \cite{GAP}.

\end{document}